\documentclass[letterpaper, 10 pt, conference]{ieeeconf}

\IEEEoverridecommandlockouts 
\usepackage{graphicx} 
\usepackage{cite}
\usepackage{amsmath,amssymb,amsfonts}
\usepackage{url}
\usepackage{color, soul}
\usepackage{hyperref}
\usepackage{graphicx}
\usepackage{caption}
\usepackage{subcaption}
\usepackage{soul}
\usepackage{optidef}
\usepackage{listings}
\usepackage{ amstext}
\usepackage{verbatim}
\usepackage{indentfirst}
\usepackage[utf8]{inputenc}
\usepackage{caption}
\usepackage{subcaption}
\usepackage{algorithm}
\usepackage{framed}
\usepackage{rotating}
\usepackage{cite}
\usepackage{optidef}
\usepackage{graphicx}
\usepackage{booktabs}
\usepackage{algpseudocode}
\usepackage{cleveref}
\algrenewcommand\algorithmicrequire{\textbf{Data:}}
\algrenewcommand\algorithmicensure{\textbf{Return:}}

\newcommand{\real}{\mathbb{R}}

\newcommand\gbf{\mathbf{g}}

\newcommand\pbf{\mathbf{p}}

\newcommand\vbf{\mathbf{v}}

\newcommand\xbf{\mathbf{x}}
\newcommand\ybf{\mathbf{y}}
\newcommand\zbf{\mathbf{z}}

\newtheorem{theorem}{Theorem}
\newtheorem{lemma}{Lemma}
\newtheorem{definition}{Definition}

\newtheorem{proposition}{Proposition}

\newtheorem{assumption}{Assumption}

\graphicspath{{/IMG/}} 

\begin{document}
\title{\LARGE \bf Induced Stackelberg Equilibrium Seeking via Iterative Tikhonov Regularization}

\author{Silvia Cianchi, Anibal Sanjab, Sergio Grammatico \thanks{ Silvia Cianchi is with Delft University of Technology, and VITO and EnergyVille  \href{mailto:s.cianchi@tudelft.nl}{\texttt{s.cianchi@tudelft.nl}}. Anibal Sanjab is with VITO and EnergyVille \href{mailto:anibal.sanjab@vito.be}{\texttt{anibal.sanjab@vito.be}}. Sergio Grammatico is with Delft University of Technology \href{mailto:s.grammatico@tudelft.nl}{\texttt{s.grammatico@tudelft.nl}}. This work was partially funded by the ERC under project ARGON.}}
\maketitle

\begin{abstract}
Existing methods for learning Stackelberg equilibria typically assume that the followers’ (variational, generalized) Nash equilibrium is unique. However, in the presence of multiple equilibria, without a selection convention, the problem may become ill-posed, thus leading standard algorithms to potentially fail to converge. This paper addresses this issue by introducing an optimal selection at the lower-level game, hereby defining a Stackelberg game with induced equilibrium selection. To this end, we enable the leader to augment the followers’ game with an additional vanishing term that acts as an incentive. We then propose a follower-agnostic zeroth-order method, whereby the leader converges to a solution of the resulting problem by iteratively probing the followers and jointly updating its decision variable and the incentive term.

\end{abstract}
\section{Introduction} 
Hierarchical decision-making models, in which a leader commits to a strategic decision anticipating the responses of a set of followers, arise in numerous engineering systems, including smart grids \cite{fochesato2022stackelberg, yu2015real}, energy-mobility systems \cite{maljkovic2023hierarchical}, wireless sensor networks \cite{nie2018stackelberg}, and cyber-physical systems \cite{xing2024denial, 7446354, 9144263}. These interactions can be modeled as Stackelberg games, where the leader minimizes an objective function subject to the followers playing a (generalized) Nash equilibrium of the lower-level game \cite{von2010market}, whose standard solution concept is the Stackelberg equilibrium (SE). 

In recent years, a growing body of work has focused on the efficient computation of SE, both in centralized full-information settings \cite{dempe2002foundations} and in interactive, partial-information frameworks \cite{ maheshwari2024follower, maljkovic2025decentralized, grontas2024big}. 
These approaches typically rely on the assumption that the lower-level equilibrium is unique, which is restrictive in many practical scenarios \cite{jeroslow1985polynomial}, and represents a fundamental theoretical limitation. In fact, when multiple equilibria exist, the follower response becomes inherently ambiguous, possibly rendering the notion of SE no longer well-defined, without a proper selection convention. 

A common approach to address this issue is to adopt pessimistic (or optimistic) SE formulations \cite{zemkoho2016solving,liu2020generic, dempe2014solution}, where the leader considers the worst-case (or best-case) scenario, and assumes that followers select the least (or the most) favorable equilibrium for its objective function. These formulations suffer of two main drawbacks: ($i$) they may be overly conservative (or overly optimistic); and ($ii$) they are primarily suitable for offline, centralized settings, where the leader knows the followers' responses. In contrast, in online and decentralized settings, where the leader iteratively probes the followers to converge to a solution, these approaches are inadequate, as the leader’s update depends on the equilibrium actually selected by the followers. An alternative approach is proposed in \cite{wang2022coordinating}, where an interactive, stochastic method is developed that is robust against arbitrary equilibrium selection. However, this solution relies on the assumption that the follower responses are drawn from an unbiased equilibrium flow, which may be difficult to verify in practice.

In the context of security games, the notion of inducible SE has been proposed \cite{guo2019inducibility}, based on the idea that the leader can enforce a favorable equilibrium selection at the lower-level through infinitesimal perturbations of its strategy. However, this mechanism fundamentally relies on structural properties of the problem. In fact, if the lower-level game is monotone, the follower equilibrium set is typically non-singleton for all leader decisions, thus perturbations of the leader’s strategy alone cannot enforce a unique follower response.
On the other hand, in variational inequality and monotone operator literature, regularization techniques, such as Tikhonov regularization, have long been used to enforce uniqueness and select specific equilibria. By introducing a strongly monotone regularization term, the resulting problem admits a unique solution, which converges to a particular equilibrium of the original problem as the regularization vanishes  \cite{yamada2005hybrid, xu2003convergence, cegielski2014properties}. This idea has been successfully applied in generalized Nash equilibrium problems \cite{benenati2023semi, benenatix2023optimal}, where a distributed cooperative algorithm leveraging Tikhonov regularization enables convergence to optimally selected equilibria. To the best of the authors' knowledge, regularization-based selection mechanisms have not been systematically integrated into Stackelberg games, where enforcing a desired selection among multiple lower-level equilibria introduces additional challenges. In fact, differently from GNE settings, where followers collaboratively converge to the optimal equilibrium, the leader here must actively enforce the selection. Moreover, the lower-level game depends on the leader decision, so the selection mechanism must account for both the followers’ equilibria and the leader’s objective.

\smallskip
\noindent
\emph{Paper contributions.}
The contributions of this paper are twofold. First, we analyze Stackelberg games in the presence of multiple follower reactions and introduce the principle of optimal selection to address the resulting ambiguity in the formulation and solution concept. This leads to the reformulation of the original problem as a \emph{Stackelberg game with induced optimal selection}, which is well-posed, and whose solutions are well-characterized. Second, we propose a follower-agnostic approach that enables the leader to solve the game without requiring explicit knowledge of the lower-level game, relying solely on iterative queries to the followers. Specifically, we let the leader augment the followers’ objective functions with a vanishing incentive term that is updated jointly with its decision variable. Building on tools from Tikhonov regularization, we show that this additional term enforces optimal selection throughout the iterations and enables convergence to a solution of the original problem. Numerical simulations considering an energy community, with the community manager acting as the leader, and the community members engaging in peer-to-peer (P2P) energy trading as the followers of a defined Stackelberg problem governing their strategic interactions, corroborate the theoretical findings.

\smallskip
\noindent
\emph{Paper organization.}
The paper is organized as follows: Section \ref{Formulation} formulates the problem, while an illustrative example is presented in Section\ref{Illustrative}. Section \ref{Contribution} presents the proposed methodology and the solution algorithm. Section \ref{NumRes} presents the simulation results, and Section \ref{Conclusion} concludes the paper.

\smallskip
\noindent
\emph{Notation.}
Let $\xbf=[x_1, ... , x_n] \in \real^n$ denote a vector, and $x_t$ be its $t$-th component. The operator $\mathrm{col}(b_1,\dots,b_n)$ stacks the elements $b_1,\dots,b_n$ in a column vector. $\nabla_1 f(\zbf, \xbf), \nabla_2f(\zbf, \xbf),$ denote the gradients of $f$ with respect to the first and second variable, respectively. $\mathrm{VI}(F,\Omega)$ defines a variational inequality (VI) problem, whose solution set is $\mathrm{SOL}(F,\Omega)=\left\{\xbf^* \in \Omega\,|\,F(\xbf^*)^\top(\xbf-\xbf^*)\ge0, \forall \xbf \in \Omega)\right\}$. Finally, $\mathrm{N}_\mathcal{X}(\xbf)$ expresses the normal cone of $\mathcal{X}\subseteq \real^n$ at $\xbf \in \mathcal{X}$, i.e., $\mathrm{N}_\mathcal{X}(\xbf):=\left\{\vbf \in \real^n | \langle \vbf, \zbf-\xbf\rangle \le 0 \, \forall \zbf \in \mathcal{X}\right\}$.

\section{Problem Formulation}\label{Formulation}
We consider a one-leader (upper level) multiple-follower (lower level) Stackelberg game.
\subsection{Lower level}
Consider $N$ followers. Let $\xbf_i \in \real^{n_i}$ be the strategy of follower $i \in \mathcal{I}= \left\{1, \dots, N\right\}$, $\xbf=\mathrm{col}\left ((\xbf_i)_{i \in \mathcal{I}}\right)$, of size $n=\sum_{i \in \mathcal{I}}n_i$, be the collection of the strategies of all the followers, and $\xbf_{-i}$ be the collection of the strategies of all the followers except follower $i$. For a fixed value of the leader's strategy, $\ybf \in \real^m$, the followers' game is defined as 
\begin{align}\label{Followers}
\forall \,i \in \mathcal{I}: \underset{\xbf_i \in \mathcal{X}_i(\xbf_{-i})} {\min} \, J_i(\xbf_i, \xbf_{-i}; \ybf),
\end{align}
where $J_i:\real^{n}\times\real^m \rightarrow \real$ is the objective function of follower $i$, and $\mathcal{X}_i(\xbf_{-i}) \subseteq\real^{n_i}$ its feasible set, which includes local and shared constraints. We introduce the following assumptions. 
\begin{assumption}\label{Ass1}
    For each $i\in \mathcal{I}$, the objective function $J_i(\, \cdot \,,\xbf_{-i};\ybf)$ in \eqref{Followers} is continuously differentiable and convex.
\end{assumption}
\begin{assumption}\label{Ass2}
    For each $i\in\mathcal{I}$, the feasible set $\mathcal{X}_i(\xbf_{-i})$ in \eqref{Followers} is nonempty, closed, and convex. 
\end{assumption}
As a solution concept for the follower game in \eqref{Followers}, we adopt the notion of generalized Nash equilibrium (GNE):
\begin{definition}{\textbf{Generalized Nash equilibrium:}}
   For a given $\ybf$, a solution $\xbf^*$ of the game in \eqref{Followers} is a GNE if for each $i \in \mathcal{I}$,

\begin{align}
      \xbf^*_i \in \quad &\underset{\xbf_i\in \mathcal{X}_i(\xbf^*_{-i})} {\arg \min} \, J_i(\xbf_i, , \xbf^*_{-i}; \ybf).
\end{align}
\end{definition}
In particular, we investigate the set of \emph{variational} generalized Nash equilibria ($v$-GNE), a subset of GNEs characterized by a well-established interpretation as the solution set of a VI \cite{facchinei2003finite}:
\begin{definition}{\textbf{Variational generalized Nash equilibrium:}}\label{vGNE}
For a given $\ybf$, a solution $\xbf^*$ of the game in \eqref{Followers} is a $v$-GNE if it is a solution of the following VI: $\mathrm{VI}(F(\cdot\,; \ybf), \Omega)$, where $F(\cdot\,; \ybf)$ denotes the pseudogradient operator, i.e., $F(\xbf; \ybf):=\mathrm{col}(\nabla_{\xbf_i} J_i(\xbf_i, \xbf_{-i};\ybf)_{i \in \mathcal{I}})$, and $\Omega=\prod_{i \in \mathcal{I}}\mathcal{X}_i(\xbf_{-i})$.
\end{definition}
Let us denote the set of $v$-GNEs of the follower game, given $\ybf$, as \begin{align}\label{MathcalEy}
\mathcal{E}(\ybf):=\mathrm{SOL}(F(\cdot;\ybf), \Omega).
\end{align}

Lemma \ref{ORA} states a well-established result from variational analysis \cite[Theorem 2.4.8]{facchinei2003finite}. 
\begin{lemma}\label{ORA}
    Let Assumption \ref{Ass1}, \ref{Ass2} hold. If the pseudogradient operator $F(\cdot\,;\ybf)$ is monotone on $\Omega$, then $\mathcal{E}(\ybf)$ in \eqref{MathcalEy} is a convex set. Moreover, if $F(\cdot;\ybf)$ is strongly monotone on $\Omega$, then $\mathcal{E}(\ybf)$ is a singleton (i.e., the $v$-GNE solution is unique). 
\end{lemma}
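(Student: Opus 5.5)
We prove the two claims directly from the definition of $\mathrm{SOL}(F(\cdot\,;\ybf),\Omega)$. Throughout, $\Omega$ is treated as a fixed closed convex set (for fixed $\ybf$), as in Definition \ref{vGNE} and Assumption \ref{Ass2}, and $F(\cdot\,;\ybf)$ is continuous by Assumption \ref{Ass1}. The key tool for convexity is a Minty-type characterization of the solution set of a monotone VI as an intersection of half-spaces.

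\emph{Convexity.} We show that, under monotonicity and continuity,
\begin{align*}
\mathcal{E}(\ybf)=\{\xbf^*\in\Omega \,|\, F(\xbf;\ybf)^\top(\xbf-\xbf^*)\ge 0,\ \forall \xbf\in\Omega\}.
\end{align*}
\emph{Inclusion ``$\subseteq$''.} If $\xbf^*\in\mathcal{E}(\ybf)$, monotonicity gives $F(\xbf;\ybf)^\top(\xbf-\xbf^*)\ge F(\xbf^*;\ybf)^\top(\xbf-\xbf^*)\ge0$.

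\emph{Inclusion ``$\supseteq$''.} Fix $\xbf\in\Omega$ and set $\xbf_t=\xbf^*+t(\xbf-\xbf^*)\in\Omega$ for $t\in(0,1]$, which lies in $\Omega$ by convexity. The Minty inequality at $\xbf_t$ gives $t\,F(\xbf_t;\ybf)^\top(\xbf-\xbf^*)\ge0$. Dividing by $t$ and letting $t\downarrow0$, continuity of $F$ yields $F(\xbf^*;\ybf)^\top(\xbf-\xbf^*)\ge0$.

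This inclusion is the only step needing care: it is where continuity (from continuous differentiability of the $J_i$) and convexity of $\Omega$ are used. Given the characterization, each set $\{\xbf^* \,|\, F(\xbf;\ybf)^\top(\xbf-\xbf^*)\ge0\}$ is a closed half-space. Hence $\mathcal{E}(\ybf)$ is the intersection of the convex set $\Omega$ with a family of half-spaces, and is therefore convex (and also closed).

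\emph{Uniqueness.} Under strong monotonicity with modulus $\mu>0$, take $\xbf^*,\zbf^*\in\mathcal{E}(\ybf)$. Testing the VI for $\xbf^*$ at $\zbf^*$ and the VI for $\zbf^*$ at $\xbf^*$, then summing, gives
\begin{align*}
\big(F(\xbf^*;\ybf)-F(\zbf^*;\ybf)\big)^\top(\zbf^*-\xbf^*)\ge0 .
\end{align*}
Strong monotonicity says the left-hand side is at most $-\mu\|\xbf^*-\zbf^*\|^2$, so $\xbf^*=\zbf^*$. Thus $\mathcal{E}(\ybf)$ has at most one element.

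\emph{Existence.} Uniqueness alone does not give a singleton; we also need $\mathcal{E}(\ybf)\neq\emptyset$. Pick $\bar\xbf\in\Omega$. Strong monotonicity gives the coercivity bound
\begin{align*}
F(\xbf;\ybf)^\top(\xbf-\bar\xbf)\ge \mu\|\xbf-\bar\xbf\|^2-\|F(\bar\xbf;\ybf)\|\,\|\xbf-\bar\xbf\| ,
\end{align*}
which is positive for $\|\xbf-\bar\xbf\|$ large. By the standard coercivity existence result for VIs on closed convex sets (Hartman--Stampacchia on the truncation $\Omega\cap \bar B_R(\bar\xbf)$, \cite{facchinei2003finite}), a solution exists. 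Together with uniqueness, $\mathcal{E}(\ybf)$ is a singleton.
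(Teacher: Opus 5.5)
Your proof is correct, and it is essentially the standard argument behind the result the paper relies on: the paper gives no proof of its own and simply cites \cite[Theorem 2.4.8]{facchinei2003finite}, where the proof goes as yours does (Minty characterization for convexity, summing the two variational inequalities for uniqueness, and coercivity plus Hartman--Stampacchia on a truncated set for existence). You treat $\Omega$ as a fixed nonempty closed convex set and take $F(\cdot\,;\ybf)$ to be jointly continuous in $\xbf$. Both are slightly stronger than Assumptions \ref{Ass1}--\ref{Ass2} as literally written, but they are the same standing assumptions the paper uses implicitly when it invokes that citation.
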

Motivated by Lemma \ref{ORA}, we introduce the usual assumption on the monotonicity of the follower game to guarantee that the set of $v$-GNEs is a convex set. 
\begin{assumption}\label{ConvexSet}
    For each $\ybf$, the pseudogradient operator of the follower game in \eqref{Followers}, $F(\cdot\,;\ybf)$, is monotone.
\end{assumption}
\subsection{Upper level}
The strategy of the leader is denoted by $\ybf \in \mathcal{Y}=\real^m$ and it is chosen with the aim of minimizing the cost $J_0: \real^m \times \real^n\rightarrow \real$ subject to the followers playing a $v$-GNE. The Stackelberg game is thus formalized as follows: 
\begin{equation}\label{Leader}
\begin{aligned}
\underset{\ybf \in \mathcal{Y}} {\min} \, &J_0(\ybf, \xbf)
\\
\mathrm{s.t.} \,& \, \xbf \in \mathcal{E}(\ybf). 
\end{aligned}
\end{equation}
Let us introduce the following assumption. 
\begin{assumption}\label{Ass4}
    For any given $\xbf$, the function $J_0(\cdot,\xbf)$ is uniformly $L_1$-Lipschitz continuous, and, for any given $\ybf$, the function $J_0(\ybf,\cdot)$ is uniformly $L_2$-Lipschitz continuous.
\end{assumption}
If the follower equilibrium set $\mathcal{E}(\ybf)$ is a singleton, then, for each $\ybf$, the lower-level response is uniquely determined, and denoted as:
\begin{align*}
    \xbf^*(\ybf):=\left\{\mathcal{E}(\ybf)\right\}.
\end{align*}
Therefore, in this case, the Stackelberg game in \eqref{Leader} reduces to the bilevel formulation 
\begin{align}\label{Bilevel}
\underset{\ybf \in \mathcal{Y}} {\min} \, &J_0(\ybf, \xbf^*(\ybf)), 
\end{align}
whose global minima are the so-called Stackelberg equilibria of \eqref{Leader} \cite{dempe2002foundations}. However, \eqref{Bilevel} is in general nonconvex. Thus, the goal in Stackelberg games is typically to find a stationary point of \eqref{Bilevel} \cite{maheshwari2024follower, grontas2024big}.

If, instead, the lower level problem in \eqref{Followers} admits multiple $v$-GNEs, i.e., the set $\mathcal{E}(\ybf)$ contains multiple solutions, then the follower response becomes set-valued, and the SE solution concept has to be further specified. In this case, we propose the introduction of an equilibrium selection mechanism, a strongly convex objective function $\phi(\xbf)$ \cite{benenatix2023optimal}. 
\begin{assumption}\label{StronglyConvex}
    The selection function $\phi$ is continuous and $\mu$-strongly convex.
\end{assumption}

Under Assumptions [\ref{Ass1}--\ref{ConvexSet}, \ref{StronglyConvex}], the reaction of the followers to a given $\ybf$ becomes single-valued:
\begin{align}\label{OptSelection}
    \xbf^*_\phi(\ybf):=\arg \min \limits_{ \xbf \in \mathcal{E}(\ybf)} \phi(\xbf).
\end{align}

As such, the original game in \eqref{Leader} can be rewritten as a \emph{Stackelberg game with equilibrium selection},
which, differently from \eqref{Leader}, always admits a well-posed bilevel characterization:
\begin{align}\label{StazPhi}
        \min \limits_{\ybf\in \mathcal{Y}} J_0(\ybf, \xbf^*_\phi(\ybf)).
    \end{align}
In view of \eqref{StazPhi}, let us introduce the notion of $\phi$-induced Stackelberg equilibrium.
\begin{definition}{\textbf{Induced Stackelberg equilibrium:}}
    A solution $\left(\ybf^*, \xbf^*_\phi(\ybf^*)\right)$ of the game in \eqref{Leader} is a $\phi$-induced Stackelberg equilibrium if
    \begin{align}
        \ybf^* \in \arg\min \limits_{\ybf \in \mathcal{Y}} J_0(\ybf, \xbf^*_\phi(\ybf)),
    \end{align}
    with $\xbf^*_\phi(\ybf)$ as in \eqref{OptSelection}.
\end{definition}
We emphasize that the introduction of our selection mechanism generates an interpretable equilibrium characterization. 
We note that the bilevel objective optimization in \eqref{StazPhi} is typically nonconvex, so convergence to a globally optimal solution cannot be guaranteed, in general. Therefore,  similarly to what is done in the standard Stackelberg framework, our goal is to find stationary points of \eqref{StazPhi}.
Moreover, in line with \cite{maheshwari2024follower}, we consider the following additional assumption.
\begin{assumption}\label{AssStrong}
    The bilevel optimization function $J_0(\cdot\,, \xbf^*_\phi(\cdot))$ is twice-continuously differentiable, $\tilde{L}$-Lipschitz continuous, and $\tilde{\ell}$-smooth.
\end{assumption}
Assumption \ref{AssStrong} is imposed primarily for analytical convenience, but it can be relaxed. Indeed, if $J_0(\cdot\,, x^*_\phi(\cdot))$ is $\tilde{L}$-Lipschitz but nonsmooth, it can be approximated arbitrarily well by a smooth function via smoothing techniques \cite{nesterov2005smooth}, thereby enabling the application of our proposed algorithm under minor modifications.
\section{Illustrative Example: equilibrium selection does matter}\label{Illustrative}
In this section, we provide an illustrative example to show that, in the presence of multiple equilibria at the lower-level, without an explicit equilibrium selection rule, the concept of SE is ambiguous, and standard gradient-based algorithms may fail. Let us consider a simple $1$-leader $2$-follower Stackelberg game, with follower decision variables $x_1, x_2 \in \real$, $\xbf=\mathrm{col}(x_1,x_2)$:
\begin{subequations}\label{Lower-ILL}
\begin{align}
\underset{ x_1 \in \mathcal{X}_1} {\min} \, & J_1(x_1, x_2; y):=\frac{1}{2} (y+\epsilon)x_1^2+ x_1x_2\\
\underset{x_2 \in \mathcal{X}_2} {\min} \, & J_2(x_2, x_1; y):=\frac{1}{2} x_2^2+(y+\epsilon) x_1x_2,
\end{align}
\end{subequations}
with $\epsilon$ such that $(y+\epsilon)>0$, $\mathcal{X}_1=\left[x_1^{\min}, x_1^{\max}\right]$ and $\mathcal{X}_2=\left[x_2^{\min}, x_2^{\max}\right]$. 
The set of $v$-GNEs of \eqref{Lower-ILL}, $\mathcal{E}(y)$, is identified by solving the variational inclusion problem $\mathbf{0} \in F(\xbf; y)+N_{\mathcal{X}_1\times\mathcal{X}_2}(\xbf)$.

To avoid technicalities associated with boundary points, we consider cases where $\mathcal{X}_1$ and $\mathcal{X}_2$ are sufficiently large to be always inactive. Therefore, we focus exclusively on the set of interior $v$-GNEs which, for any fixed $y$, contains multiple solutions:
\begin{align*}
\mathcal{E}^\mathrm{int}(y)=\left\{\xbf\in \mathrm{int}(\mathcal{X}_1)\times \mathrm{int}(\mathcal{X}_2) \, |  \,x_2 = -(y+\epsilon)x_1\right\},
\end{align*}
whose projection onto the variable of the first follower is:
\begin{align*}
{\mathcal{E}}_1^\mathrm{int}(y)=\left\{x_{1}\, |\, \mathrm{col}\left(x_{1 }, \, -(y+\epsilon)x_{1 }\right) \in \mathrm{int}(\mathcal{X}_1) \times \mathrm{int}(\mathcal{X}_2)\right\}.
\end{align*}

At the upper level, the leader aims at solving:
\begin{equation}\label{Stackelberg-ILL}
\begin{aligned}
\underset{y} {\min} \, & J_0(y,\xbf):=y^2+y(x_1+x_2)
\\
\mathrm{s. t.} \, \,  &\xbf\in \mathcal{E}^\mathrm{int}(y).
\end{aligned}
\end{equation}

As explained in Section \ref{Formulation}, the follower reaction is set-valued. 

We now consider different scenarios of lower-level reactions to illustrate the impact of followers' equilibrium multiplicity on the performances of gradient-based algorithms for Stackelberg equilibrium seeking. 

In particular, we solve the game in \eqref{Stackelberg-ILL} via a first-order method, where, at each iteration, the leader observes the follower reaction, $\xbf_k$, that is, one solution picked from the set $\mathcal{E}^\mathrm{int}(y_k)$, and updates its decision variable with one step of gradient descent: 
\begin{align}\label{update}
    y_{k+1}=y_k-\eta \nabla_k,
\end{align}
where the vector $\nabla_k$ stands for
\begin{align}\label{ChainRule}
\nabla_k=\nabla_1 J_0(y_k, \xbf_k)+\nabla_2 J_0(y_k, \xbf_k) \frac{\mathrm{d} \xbf_k}{\text{d}y}.
\end{align}

We first consider the case where, at each iteration $k$, the variable of the first follower, $x_{1 \mid k}$, behaves as an exogenous variable, which takes values in $\mathcal{E}_1^{\mathrm{int}}(y_k)$ 
independently of the current decision variable of the leader, $y_k$. As such, the lower-level response is $\xbf_k=\mathrm{col}(x_{1 \mid k},\,-(y_k+\epsilon)\,x_{1 \mid k})$,
thus $\frac{\text{d}\xbf_k}{\text{d}y}=\mathrm{col}(0, \, -x_{1\mid k})$, and $\nabla_k$ in \eqref{ChainRule} takes the form:
\begin{align}\label{inexactNabla}
\nabla_k=2y_k(1-x_{1 \mid k})+x_{1 \mid k}(1-\epsilon).
\end{align}
Next, Proposition \ref{Cost/Osc} establishes a necessary condition on 
the sequence $\left(x_{1 \mid k}\right)_{k \in \mathbb{N}}$ ensuring the convergence of the iterates $\left(y_k\right)_{k \in \mathbb{N}}$ generated by \eqref{update} to a SE of the game \eqref{Stackelberg-ILL}. 
\begin{proposition}\label{Cost/Osc}
    Let us assume that $\left(y_k\right)_{k \in \mathbb{N}}$ generated by \eqref{update} converges, i.e., there exists $y_{\infty} \in \real$ such that $\lim \limits_{k \rightarrow \infty} y_k = y_{\infty}$. Then, there exists $ x_{1 \mid \infty}\in \real, \, x_{1 \mid \infty} \neq 1$, such that $\lim \limits_{k \rightarrow \infty} x_{1 \mid k} = x_{1 \mid\infty}$. Furthermore, $y_\infty$ is the SE solution of the game in \eqref{Stackelberg-ILL} with $\xbf=\mathrm{col}(x_{1 \mid\infty}, \, -(y+\epsilon)x_{1 \mid \infty})$.
\end{proposition}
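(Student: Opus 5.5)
The plan is to read everything off the fixed-point structure of the update \eqref{update} with the explicit gradient \eqref{inexactNabla}. Since $\eta>0$ is a constant step size and $(y_k)_{k\in\mathbb{N}}$ converges, we have $y_{k+1}-y_k\to 0$, and therefore $\nabla_k=(y_k-y_{k+1})/\eta\to 0$. Rewriting \eqref{inexactNabla} as
\begin{align*}
\nabla_k = 2y_k + x_{1\mid k}\,(1-\epsilon-2y_k),
\end{align*}
we get $x_{1\mid k}(1-\epsilon-2y_k)\to -2y_\infty$.

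The key step is to divide by the factor $1-\epsilon-2y_k$. I split into two cases according to its limit $1-\epsilon-2y_\infty$.

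\emph{Nondegenerate case} ($1-\epsilon-2y_\infty\neq 0$). The factor is bounded away from zero for large $k$, so
\begin{align*}
x_{1\mid k}\to x_{1\mid\infty}:=\frac{-2y_\infty}{1-\epsilon-2y_\infty}.
\end{align*}
If $x_{1\mid\infty}=1$, then $-2y_\infty=1-\epsilon-2y_\infty$, which forces $\epsilon=1$. Hence $x_{1\mid\infty}\neq 1$, provided $\epsilon\neq 1$.

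\emph{Degenerate case} ($1-\epsilon-2y_\infty=0$). Here I use that $x_{1\mid k}\in\mathcal{E}_1^{\mathrm{int}}(y_k)\subseteq \mathcal{X}_1$, which is a bounded interval. So $x_{1\mid k}$ is bounded and the product $x_{1\mid k}(1-\epsilon-2y_k)$ tends to $0$. This forces $y_\infty=0$, hence $\epsilon=1$, again a contradiction when $\epsilon\neq 1$.

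This case analysis is the main obstacle. The argument requires $\epsilon\neq1$, and for $\epsilon=1$ the statement appears not to hold as written. In that case $\nabla_k=2y_k(1-x_{1\mid k})$, and with $y_0=0$ one gets $y_k\equiv 0$ for any sequence $(x_{1\mid k})$, including non-convergent ones. I would therefore state $\epsilon\neq 1$ as a standing assumption; I do not see a way to obtain the claim without it.

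For the second claim, fix $x_1=x_{1\mid\infty}$ and substitute the reaction $\xbf=\mathrm{col}(x_1,-(y+\epsilon)x_1)$ into \eqref{Stackelberg-ILL}. This gives the scalar function
\begin{align*}
g(y)=y^2(1-x_1)+y\,x_1(1-\epsilon).
\end{align*}
Its derivative is $g'(y)=2y(1-x_1)+x_1(1-\epsilon)$, which is exactly \eqref{inexactNabla} evaluated at $x_1$. Passing to the limit in $\nabla_k\to 0$ gives $g'(y_\infty)=0$. Because $x_{1\mid\infty}\neq1$, $g$ is a genuine quadratic, so this stationary point is unique:
\begin{align*}
y_\infty=-\frac{x_{1\mid\infty}(1-\epsilon)}{2(1-x_{1\mid\infty})}.
\end{align*}
Thus $y_\infty$ is the SE of the game with $x_1$ frozen at $x_{1\mid\infty}$, understood, as in Section~\ref{Formulation}, as a stationary point of the reduced bilevel objective. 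It is a global minimizer when $x_{1\mid\infty}<1$, since then $g$ is convex; I would add this remark to the conclusion.
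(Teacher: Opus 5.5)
Your proof is correct, and it has the same skeleton as the paper's. From $\nabla_k=(y_k-y_{k+1})/\eta\to0$ you extract a limiting relation between $y_k$ and $x_{1\mid k}$, then solve it. The difference is which quantity you divide by. The paper solves for $y$: it divides by $1-x_{1\mid k}$ to get $-x_{1\mid k}(1-\epsilon)/(2(1-x_{1\mid k}))\to y_\infty$, then inverts $x\mapsto x/(1-x)$. You divide by $1-\epsilon-2y_k$, whose limit is known because $y_k$ converges.

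Your choice buys rigor. It makes explicit three points the paper leaves implicit:
(i) Dividing by $1-x_{1\mid k}$ needs $x_{1\mid k}$ to stay away from $1$. This holds when $\epsilon\neq1$, but the paper does not argue it.
(ii) The inversion breaks down when $x_{1\mid k}/(1-x_{1\mid k})\to-1$, i.e.\ when $|x_{1\mid k}|\to\infty$. This is exactly your degenerate case $y_\infty=(1-\epsilon)/2$, which you exclude using boundedness of $\mathcal{X}_1$.
(iii) The claim that $\lim x_{1\mid k}/(1-x_{1\mid k})$ must be finite tacitly uses $1-\epsilon\neq0$.

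Your $\epsilon=1$ counterexample is valid. So $\epsilon\neq1$ is a hypothesis missing from the statement itself, not a limitation of your method.

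The paper also never argues the ``Furthermore'' claim separately; its limiting formula for $y_\infty$ is precisely your condition $g'(y_\infty)=0$. Your caveat about global minimality is justified. Take $\epsilon=0$, $x_{1\mid k}\equiv2$, $y_0=1$: then $\nabla_k\equiv0$ and $y_k\equiv1$, which maximizes $g(y)=-y^2+2y$. So unless $x_{1\mid\infty}<1$, ``SE'' can only be read as a stationary point of the reduced objective.
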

\begin{proof}
    Consider the update rule in \eqref{update} with $\nabla_k$ as in \eqref{inexactNabla}. If $y_k \rightarrow y_{\infty} \in \real$, then necessarily $y_{k+1}-y_k \rightarrow 0$, which implies $2 y_{k}(1-x_{1 \mid k})+x_{1 \mid k}(1-\epsilon) \rightarrow 0$. Taking the limiting value, it holds that $\frac{- x_{1|k}(1-\epsilon)}{2(1-x_{1 \mid k})} \rightarrow y_\infty$. For this limit to exist and be finite, it necessarily holds $\lim \limits_{k \rightarrow \infty}\frac{x_{1 \mid k}}{1-x_{1 \mid k}}\in \real$, which implies $x_{1 \mid k} \rightarrow x_{1 \mid \infty}\in \real,$ for some $ x_{1 \mid \infty} \neq 1$.
\end{proof}

The plot in Fig. \ref{IllustrativeExample1} corroborates the above results, showing that if the sequence $\left(x_{1 \mid k}\right)_{k \in \mathbb{N}}$ persistently oscillates, violating the necessary condition in Proposition \ref{Cost/Osc}, then the iterates $\left(y_k\right)_{k \in \mathbb{N}}$, generated by \eqref{update}, do not converge.  

\begin{figure}
        \includegraphics[width=\columnwidth]{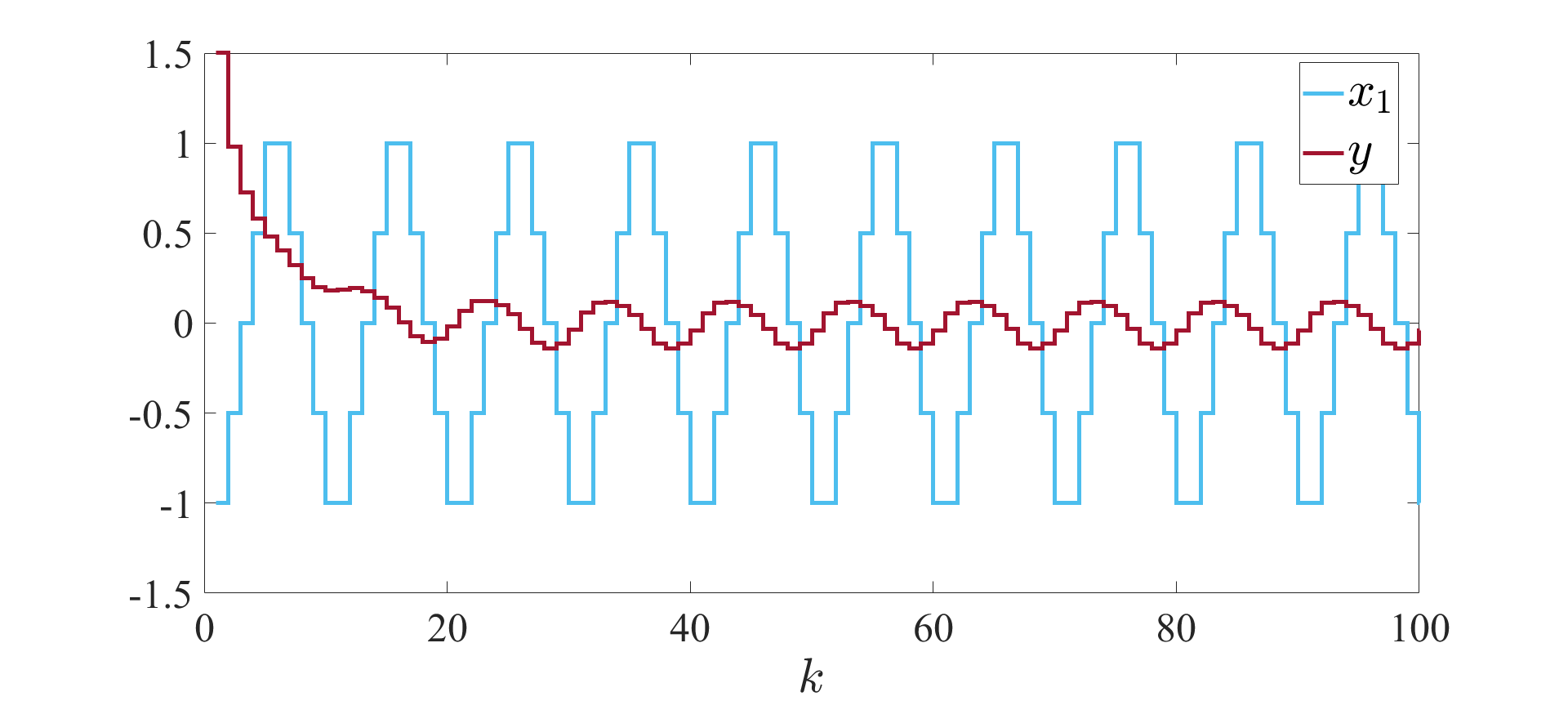}
        \captionsetup{width=\columnwidth}
        \caption{If the sequence $\left(x_{1 \mid k}\right)_{k \in \mathbb{N}}$ violates the necessary condition in Proposition \ref{Cost/Osc}, then the iterates $\left(y_k\right)_{k \in \mathbb{N}}$ generated by \eqref{update} fail to converge to a SE of the game \eqref{Stackelberg-ILL}.}
        \label{IllustrativeExample1}
\end{figure}

As a second scenario, we consider the case where a strongly monotone selection function $\phi(\xbf)=(x_1-1)^2+100 x_2^2$ is introduced, that enforces a single-valued follower reaction mapping, $\xbf^*_\phi(y) =\mathrm{col}\left( x^*_1(y), \, -(y+\epsilon)x_1^*(y)\right)$, with $x^*_1(y)=(1+100y^2)^{-1}\in {\mathcal{E}}_1^\mathrm{int}(y)$. As such, at every iteration $k$, it holds $\xbf_k=\xbf^*_\phi(y_k)$. We separately address two different information regimes: \begin{enumerate}
    \item \textit{Inexact regime:} the leader is not aware of the followers' selection mechanism, $\phi(\xbf)$, hence, it treats $x_{1\mid k}$ as a free variable, exogenously observed at each iteration $k$, and employs the update rule \eqref{update} with $\nabla_k$ in \eqref{inexactNabla} as before. Since the leader’s optimization is neglecting the dependence of $x^*_1(y)$ on $y$, this approach is not exact. The purple line in Fig. \ref{IllustrativeExample2} shows that the iterates generated by \eqref{update} converge to some $\bar{y}$, which is not a stationary point of the bilevel objective function $J_0(y, \xbf^*_\phi(y))$.
    \item \textit{Exact regime:} the leader is aware of the optimal selection, and computes the term $\nabla_k$ as in \eqref{ChainRule}, explicitly accounting for the induced mapping  $x^*_{1}(y)$. 
    As illustrated in the green line in Fig. \ref{IllustrativeExample2}), the iterates $\left(y_k\right)_{k \in \mathbb{N}}$ generated by \eqref{update} converge to the $\phi$-induced SE, $(y^*_\phi, \xbf^*_\phi(y^*_\phi))$, that is the global minimum of $J_0(y, \xbf_\phi^*(y))$. 
\end{enumerate}
\begin{figure}
        \includegraphics[width=\columnwidth]{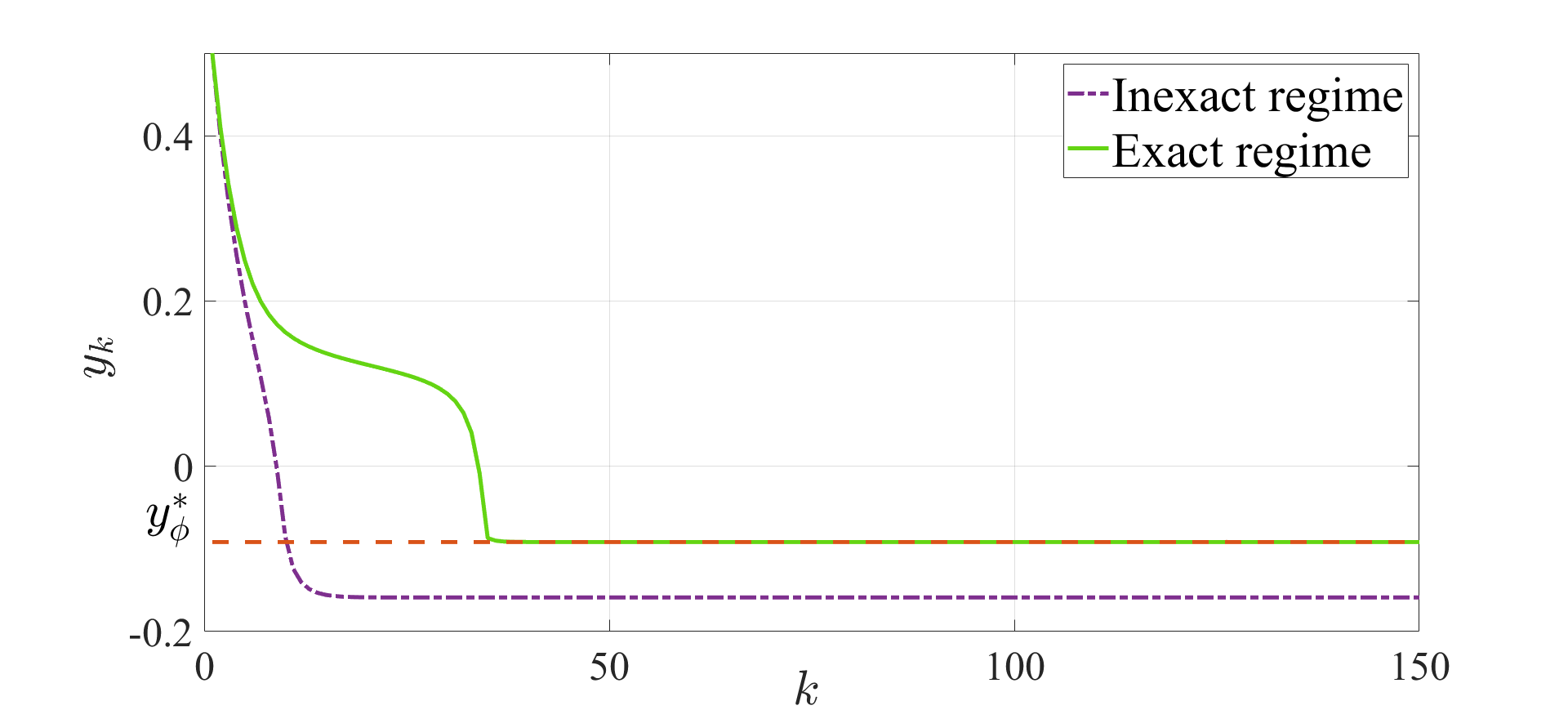}
        \captionsetup{width=\columnwidth}
        \caption{If a selection $\phi(\xbf)$ is introduced, the follower reaction, $\xbf^*_\phi(y)$, is uniquely determined by $y$. Without accounting for this dependence, the $\phi$-induced SE cannot be achieved.}
        \label{IllustrativeExample2}
\end{figure}

This illustrative example highlights two key points. First, in the presence of multiple equilibria at the lower-level, if an explicit equilibrium selection rule is not specified, the concept of SE is ambiguous, and standard algorithms may fail (e.g., oscillate). Second, when an optimal selection mechanism exists, it is beneficial for the leader to account for it, to achieve the corresponding $\phi$-induced SE, which is, by definition, optimal for the leader.
This motivates the adoption (or enforcement by the leader) of optimal equilibrium selection at the lower level, to provide a principled way to guarantee convergence to an interpretable solution.
\section{Main Contribution} \label{Contribution}
Motivated by the illustrative example in Section \ref{Illustrative}, we consider the Stackelberg game with equilibrium selection in \eqref{StazPhi}. In this section, we first propose a method which enables the leader to enforce the optimal selection mechanism at the lower-level, without requiring any knowledge about the followers' game. Then, we propose a zeroth-order algorithm that enables the leader to converge to a solution of the Stackelberg game with equilibrium selection through iterative interactions with the followers. 
\subsection{Tikhonov regularization theory}
We note that, for a fixed $\ybf$, the problem of optimal selection among $v$-GNEs in \eqref{OptSelection} is a hierarchical VI problem of the form \cite[Eq. (16)]{benenatix2023optimal}
\begin{equation}\label{VI}
\begin{aligned}
\underset{\xbf } {\min} \, &\phi(\xbf)
\\
\mathrm{s.t.} \,& \, \xbf \in \mathrm{SOL}(F(\cdot\,;\ybf), \Omega). 
\end{aligned}
\end{equation}

This enables the employment of tools from variational analysis, where the desired selection can be enforced by the use of Tikhonov-based regularization schemes \cite{facchinei2003finite}. The key idea of this approach lies in introducing a perturbation in the original problem; the solution of \eqref{VI} is proven to be the limit solution of the perturbed problem, as the perturbation tends to zero. Specifically, given $\beta >0$, the regularized problem is 
    \begin{align}\label{Tikh}
        \mathrm{VI}(F_\beta(\xbf;\ybf), \Omega),
    \end{align}
with regularized operator $F_\beta( \xbf; \ybf)=F(\xbf;\ybf)+\beta \nabla \phi(\xbf)$. 
In Lemma \ref{SERVEORA}, the main results of Tikhonov regularization theory are restated \cite[Theorem 12.2.5]{facchinei2003finite} for convenience.
\begin{lemma}\label{SERVEORA}
    For every $\beta >0$:
    \begin{itemize}
        \item The Tikhonov regularized problem in \eqref{Tikh} admits a unique solution $\xbf_\beta(\ybf)$;
        \item As $\beta \rightarrow 0$, the sequence $\left(\xbf_\beta(\ybf)\right)_{\beta\rightarrow 0}$ converges to the unique solution $\xbf^*_\phi(\ybf)$ of \eqref{OptSelection}. 
    \end{itemize}
\end{lemma}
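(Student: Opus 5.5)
Fix $\ybf$ and write $F=F(\cdot\,;\ybf)$, $\mathcal{E}=\mathcal{E}(\ybf)$. I treat $\Omega$ as a fixed nonempty, closed, convex set, as in Definition \ref{vGNE}. I also assume $\mathcal{E}\neq\emptyset$, since otherwise \eqref{OptSelection} is meaningless, and that $\phi$ is differentiable so that $\nabla\phi$ in $F_\beta$ makes sense. The plan follows the classical Tikhonov argument in two steps: existence and uniqueness for each $\beta>0$, then boundedness of the regularized path and identification of its limit points.

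\emph{Step 1 (well-posedness).} By Assumption \ref{StronglyConvex}, $\nabla\phi$ is $\mu$-strongly monotone, and by Assumption \ref{ConvexSet}, $F$ is monotone. Hence $F_\beta=F+\beta\nabla\phi$ is $\beta\mu$-strongly monotone. It is also continuous, because the $J_i$ are $C^1$ (Assumption \ref{Ass1}) and I take $\phi$ to be $C^1$. A continuous strongly monotone VI on a nonempty closed convex set has exactly one solution: this is the strong-monotonicity part of Lemma \ref{ORA}, applied with $F_\beta$ in place of $F$. That solution is $\xbf_\beta(\ybf)$.

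\emph{Step 2 (boundedness).} Take any $\bar\xbf\in\mathcal{E}$. Test the VI for $\xbf_\beta$ at $\bar\xbf$, and the VI for $\bar\xbf$ at $\xbf_\beta$, then add the two inequalities. Monotonicity of $F$ cancels the $F$ terms and leaves
\[
\nabla\phi(\xbf_\beta)^\top(\bar\xbf-\xbf_\beta)\ge 0 .
\]
Write $\nabla\phi(\xbf_\beta)=\nabla\phi(\bar\xbf)+\big(\nabla\phi(\xbf_\beta)-\nabla\phi(\bar\xbf)\big)$ and use strong monotonicity of $\nabla\phi$ to get $\mu\|\xbf_\beta-\bar\xbf\|^2\le \nabla\phi(\bar\xbf)^\top(\bar\xbf-\xbf_\beta)$. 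It follows that $\|\xbf_\beta-\bar\xbf\|\le\|\nabla\phi(\bar\xbf)\|/\mu$, uniformly in $\beta$, so the family is bounded.

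\emph{Step 3 (limit points).} Let $\beta_k\to0$ and let $\hat\xbf$ be a limit point of $(\xbf_{\beta_k})$; it lies in $\Omega$ because $\Omega$ is closed. For any $\xbf\in\Omega$ we have $F(\xbf_{\beta_k})^\top(\xbf-\xbf_{\beta_k})\ge-\beta_k\nabla\phi(\xbf_{\beta_k})^\top(\xbf-\xbf_{\beta_k})$. The right-hand side tends to $0$ by boundedness and continuity of $\nabla\phi$, so continuity of $F$ gives $\hat\xbf\in\mathcal{E}$.

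\emph{Step 4 (optimality).} This is the main obstacle: showing that $\hat\xbf$ minimizes $\phi$ over $\mathcal{E}$, rather than being some arbitrary equilibrium. Step 2 gives $\nabla\phi(\xbf_{\beta_k})^\top(\bar\xbf-\xbf_{\beta_k})\ge0$ for every $\bar\xbf\in\mathcal{E}$. Passing to the limit yields $\nabla\phi(\hat\xbf)^\top(\bar\xbf-\hat\xbf)\ge0$ for all $\bar\xbf\in\mathcal{E}$. Since $\mathcal{E}$ is convex (Lemma \ref{ORA}) and $\phi$ is convex, this is exactly the first-order optimality condition for \eqref{OptSelection}. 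By strong convexity that problem has the unique minimizer $\xbf^*_\phi(\ybf)$, so $\hat\xbf=\xbf^*_\phi(\ybf)$. Every limit point of the bounded family therefore coincides with $\xbf^*_\phi(\ybf)$, and the whole family converges to it as $\beta\to0$. This recovers \cite[Theorem 12.2.5]{facchinei2003finite}.
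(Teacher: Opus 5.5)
Your proof is correct. The paper gives no argument of its own for this lemma: it restates the classical Tikhonov result and relies entirely on \cite{facchinei2003finite}. You instead reconstruct the standard proof behind that citation:
\begin{itemize}
\item strong monotonicity of $F_\beta$ gives well-posedness for each $\beta>0$;
\item adding the two variational inequalities gives the $\beta$-uniform bound $\|\xbf_\beta-\bar\xbf\|\le\|\nabla\phi(\bar\xbf)\|/\mu$;
\item closedness of $\Omega$ and continuity of $F$ place every limit point in $\mathcal{E}(\ybf)$;
\item passing $\nabla\phi(\xbf_{\beta_k})^\top(\bar\xbf-\xbf_{\beta_k})\ge 0$ to the limit yields the first-order condition for \eqref{OptSelection}.
\end{itemize}

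The citation buys brevity. Your version buys transparency about hypotheses that the result needs but that the paper's assumptions do not literally supply:
\begin{itemize}
\item $\Omega$ must be a fixed closed convex joint set, whereas Assumption~\ref{Ass2} only speaks about each $\mathcal{X}_i(\xbf_{-i})$ separately;
\item $\phi$ must be differentiable and $F(\cdot\,;\ybf)$ jointly continuous, whereas Assumption~\ref{Ass1} as written only gives differentiability in $\xbf_i$;
\item $\mathcal{E}(\ybf)$ must be nonempty; otherwise \eqref{OptSelection} has no solution, and the regularized family, though well defined for each $\beta>0$, may be unbounded.
\end{itemize}

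Your argument also makes plain that the lemma is purely qualitative. The Step~2 bound does not shrink with $\beta$, and no rate holds in general. For instance, with $F(x)=x^3$ on $\Omega=\real$ and $\phi(x)=\tfrac12(x-1)^2$, one gets $x_\beta=\beta^{1/3}(1-x_\beta)^{1/3}\sim\beta^{1/3}$. This is incompatible with the estimate $\|\xbf_\beta(\ybf)-\xbf^*_\phi(\ybf)\|\le\frac{\beta}{\mu}\|\nabla\phi(\xbf^*_\phi(\ybf))\|$ that the appendix attributes to the same reference. So you should read nothing more than convergence into this lemma.
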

The additional regularization term can be interpreted as an incentive mechanism introduced by the leader to steer the followers' reaction. In fact, the regularized operator $F_\beta(\xbf;\ybf)$ is the pseudogradient operator of a regularized game in which each follower’s objective function, $J_i(\xbf; \ybf)$, is augmented by the incentive term $\beta \phi(\xbf)$, introduced by the leader. The resulting regularized lower-level game is
\begin{align}\label{betaFollowers}
\forall \,i \in \mathcal{I}: \underset{\xbf_i \in \mathcal{X}_i(\xbf_{-i})} {\min} \, J_i(\xbf_i, \xbf_{-i}; \ybf)+\beta\phi(\xbf).
\end{align}

The Tikhonov regularization approach consists in perturbing the lower-level game with an additional incentive term, $\beta \phi(\xbf)$, to enforce the uniqueness of the follower reaction. Furthermore, by letting the regularization parameter, $\beta$, vanish, the unique regularized lower-level equilibrium, $\xbf_\beta(\ybf)$, converges to the optimal equilibrium of the original unperturbed lower-level game, $\xbf^*_\phi(\ybf)$. 
\subsection{Algorithm derivation}
We now discuss methods that enable the leader to solve the game with optimal selection in \eqref{StazPhi} in a follower-agnostic fashion. In view of the insights from Tikhonov regularization theory, a natural approach consists in solving, at every iteration, the full regularized lower-level problem in the limit $\beta \rightarrow 0$ to recover, for each given $\ybf_k$, the optimally selected lower-level equilibrium, $x^*_\phi(\ybf_k)$. In this case, since the follower response is unique, existing iterative methods to solve Stackelberg games, such as \cite{grontas2024big, maheshwari2024follower}, can be applied to update the leader’s decision. 
However, this nested scheme is computationally demanding, as it requires solving the full regularized lower-level problem to high accuracy at every iteration. Also, for vanishing values of $\beta$, the regularized follower problem becomes increasingly ill-conditioned. To overcome these limitations, we propose an alternative approach in which the regularization parameter $\beta_k$ can be updated jointly with the leader’s decision variable, $\ybf_k$, and be progressively driven to zero along the iterations, while still preserving the convergence to a stationary point of \eqref{StazPhi}.
 \begin{algorithm}
\caption{Zeroth-order induced SE seeking}\label{betaZOM}
\begin{algorithmic}[1]
   \Require Time horizon $K$, Initial conditions $\ybf_0 \in \real^m,$ Step sizes $(\eta_k)$, Perturbation radius $(\delta_k)$, Regularization parameters $(\beta_k)$
   \For{$k = 1$ to $K-1$}
   \State Sample $\vbf_k \sim \mathrm{Unif}(\mathcal{S}(\real^T))$;
   \State Assign $\hat{\ybf}_k=\ybf_k + \vbf_k \delta_k$;
   \State Collect $\xbf_k=\xbf_{\beta_k}(\ybf_k)$;
   \State Collect $\hat{\xbf}_k=\xbf_{\beta_k}(\hat{\ybf}_k)$;
   \State Compute $\hat{\gbf}_k=\frac{m}{\bar{\delta_k}}(J_0(\ybf_k, \xbf_k)- \,J_0(\hat{\ybf}_k, \hat{\xbf}_k))\vbf_k$;
   \State Update $\ybf_{k+1}=\ybf_k-\eta_k \hat{\gbf}_k$;
   \EndFor
\end{algorithmic}
\end{algorithm}
As such, we propose Algorithm \ref{betaZOM}, which is a zeroth-order method where, at each iteration, the leader sets the parameter $\beta_k$ of the incentive term $\beta_k \phi(\xbf)$, and queries the lower-level with the current value of its decision variable, $\ybf_k$, and with a perturbed version of it, $\hat{\ybf}_k=\ybf_k+\delta_k\vbf_k$, where $\delta_k \in \real$ is the perturbation radius and $\vbf_k \in \real^m$ a random direction vector. Based on the corresponding lower-level reactions, i.e., the solutions $\xbf_{\beta_k}(\ybf_k),$ and $ \xbf_{\beta_k}(\hat{\ybf}_k)$ of the regularized lower-level game \eqref{betaFollowers}, the leader computes the value of the proposed gradient estimator
\begin{align}\label{Gradient}
    \hat{\gbf}_k=\frac{m}{\delta_k}(J_0(\ybf_k,\xbf_{\beta_k}(\ybf_k))- \,J_0(\hat{\ybf}_k,\xbf_{\beta_k}(\hat{\ybf}_k))\vbf_k,
\end{align}
and updates its decision variable $\ybf_{k}$ with the following update rule
\begin{align}
    \ybf_{k+1}=\ybf_k-\eta_k\,\hat{\gbf}_k.
\end{align} 

The incentive term is also updated with a reduced value of the parameter $\beta_k$. 
Our proposed gradient estimator in \eqref{Gradient} builds on classical zeroth-order two-point gradient approximations \cite{nesterov2017random}. The difference with standard two-point gradient estimators \cite[Eq. 30]{nesterov2017random} is that, rather than evaluating the leader’s objective at the optimally selected follower equilibrium, i.e., 
\begin{align}\label{GradientVero}
    \gbf_k=\frac{m}{\delta_k}(J_0(\ybf_k,\xbf^*_\phi(\ybf_k))- \,J_0(\hat{\ybf}_k,\xbf^*_{\phi}(\hat{\ybf}_k))\vbf_k,
\end{align}
we use the solution of the regularized lower-level game at the current regularization level $\beta_k$, i.e., $\xbf_{\beta_k}(\ybf_k)$. This enables efficient gradient approximation without fully resolving the lower-level optimal selection problem at each iteration. For this reason, the choice of $\hat{\gbf}_k$ introduces an additional bias in the gradient estimator. However, by maintaining a suitable time-scale separation between $\beta_k$ and $\eta_k$, convergence is not hindered, as stated in our main result, Theorem \ref{BoundZOM}.
\begin{theorem}\label{BoundZOM}
Let $\eta_k=\bar{\eta}(k+1)^{-1/2}m^{-1}, \delta_k=\bar{\delta}(k+1)^{-1/4}m^{-1/2}$, such that $\bar{\eta}\le m/2 \tilde{\ell}$, and $\beta_k=\bar{\beta}(k+1)^{-\alpha}$ with $\alpha > \frac{1}{2}$. Then, the iterations $\left(\ybf_k\right)_{k \in [K]}$ of Algorithm \ref{betaZOM} are such that
    \begin{align}\label{Bound}
        \frac{1}{\sum_{k=1}^K \eta_k} \sum_{k=1}^K \eta_k \mathbb{E}\left[\left\|\nabla J_0(\ybf_k,\xbf^*_\phi(\ybf_k))\right\|^2\right]\le \mathcal{U},
    \end{align}
    where the expectation is taken with respect to $\vbf_k$, and $\mathcal{U}=\mathcal{U}_1+ \mathcal{U}_2+ \mathcal{U}_3+ \mathcal{U}_4$, with $\mathcal{U}_1=\frac{m (J_0(\ybf_0,\xbf^*_\phi(\ybf_0))-{J}_0^*)}{\bar{\eta}\sqrt{K}};\, \mathcal{U}_2=\frac{\tilde{\ell}^2m \bar{\delta}^2 \mathrm{log}(K)}{4 \sqrt{K}}; \, \mathcal{U}_3=\frac{m^3L_2^2 4 C\bar{\beta}^2}{\mu^2\bar{\delta}^2 \sqrt{K}};\,\mathcal{U}_4=\frac{4m\tilde{L}^2\tilde{\ell}\bar{\eta}\mathrm{log}(K)}{\sqrt{K}}$, assuming that $J_0(\ybf,\xbf^*_\phi(\ybf))\ge J^*_0 \,\,\forall \, \ybf \in \real^m$, and $C >0$.
\end{theorem}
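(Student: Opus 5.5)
We follow the standard nonconvex zeroth-order analysis of \cite{nesterov2017random} applied to the smoothed surrogate of the bilevel objective, and treat the Tikhonov regularization as an additional, controllable bias. Let $f(\ybf):=J_0(\ybf,\xbf^*_\phi(\ybf))$. By Assumption \ref{AssStrong}, $f$ is $\tilde L$-Lipschitz and $\tilde\ell$-smooth. Let $f_{\delta}(\ybf):=\mathbb{E}_{\ubf}[f(\ybf+\delta\ubf)]$ be its ball-smoothing, with $\ubf$ uniform on the unit ball. Then $f_\delta$ is $\tilde\ell$-smooth and $\|\nabla f_\delta(\ybf)-\nabla f(\ybf)\|\le \tilde\ell\delta m/2$ (up to constants). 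The ideal estimator $\gbf_k$ in \eqref{GradientVero} is unbiased for $\nabla f_{\delta_k}(\ybf_k)$, up to sign convention. We write
\begin{align*}
\hat\gbf_k=\gbf_k+\mathbf{b}_k,
\end{align*}
where $\mathbf{b}_k$ collects the Tikhonov error. The argument has four steps, taken in order.

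\textbf{Step 1 (bias bound).} By Assumption \ref{Ass4} and $\|\vbf_k\|=1$,
\begin{align*}
\|\mathbf{b}_k\|\le \frac{m L_2}{\delta_k}\Big(\|\xbf_{\beta_k}(\ybf_k)-\xbf^*_\phi(\ybf_k)\|+\|\xbf_{\beta_k}(\hat\ybf_k)-\xbf^*_\phi(\hat\ybf_k)\|\Big).
\end{align*}
We need a quantitative version of Lemma \ref{SERVEORA}, namely $\|\xbf_\beta(\ybf)-\xbf^*_\phi(\ybf)\|\le \sqrt{C}\,\beta/\mu$ uniformly in $\ybf$. This is where the constant $C$ and the factor $\mu^{-2}$ in $\mathcal U_3$ come from.

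To derive it, test the VI for $\xbf_\beta$ at $\xbf^*_\phi$ and the VI for $\xbf^*_\phi$ at $\xbf_\beta$, then add them. Monotonicity of $F$ gives $\nabla\phi(\xbf_\beta)^\top(\xbf^*_\phi-\xbf_\beta)\ge0$. Combining this with strong convexity of $\phi$ and optimality of $\xbf^*_\phi$ on $\mathcal E(\ybf)$ yields a bound of order $\beta$ times a gradient-norm constant over $\mu$.

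I expect this to be the main obstacle. Plain Tikhonov theory only gives convergence, not a rate, and a linear rate in $\beta$ typically needs an error-bound or regularity condition on $\mathcal E(\ybf)$. I would first try to get it from the monotone-VI argument above, absorbing the problem-dependent quantities (bound on $\nabla\phi$ over the relevant set, error-bound modulus) into $C$. Failing that, I would state it as the implicit standing assumption behind $C$. The result is $\mathbb{E}\|\mathbf b_k\|^2\le 4m^2L_2^2C\beta_k^2/(\mu^2\delta_k^2)$.

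\textbf{Step 2 (descent inequality).} Apply $\tilde\ell$-smoothness of $f$ to $\ybf_{k+1}=\ybf_k-\eta_k\hat\gbf_k$ and take the conditional expectation over $\vbf_k$:
\begin{align*}
\mathbb{E}f(\ybf_{k+1})\le f(\ybf_k)-\eta_k\nabla f(\ybf_k)^\top\mathbb{E}\hat\gbf_k+\tfrac{\tilde\ell\eta_k^2}{2}\mathbb{E}\|\hat\gbf_k\|^2 .
\end{align*}
Split $\mathbb{E}\hat\gbf_k=\nabla f(\ybf_k)+(\nabla f_{\delta_k}-\nabla f)(\ybf_k)+\mathbb{E}\mathbf b_k$. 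Use Young's inequality to bound the cross terms by $\tfrac12\|\nabla f\|^2$ plus the squared smoothing bias $\tilde\ell^2\delta_k^2m^2/4$ and the squared Tikhonov bias. For the second moment, use $\mathbb{E}\|\hat\gbf_k\|^2\le 2\mathbb{E}\|\gbf_k\|^2+2\mathbb{E}\|\mathbf b_k\|^2$ together with the Lipschitz bound $\mathbb{E}\|\gbf_k\|^2\lesssim m^2\tilde L^2$. The condition $\bar\eta\le m/(2\tilde\ell)$ ensures $\tilde\ell\eta_k\le 1/2$, so the $\eta_k^2\|\mathbf b_k\|^2$ term is dominated by the $\eta_k\|\mathbf b_k\|^2$ term.

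\textbf{Step 3 (telescoping).} Sum over $k=1,\dots,K$, use $f\ge J_0^*$, and divide by $\sum_k\eta_k\gtrsim \bar\eta\sqrt K/m$. The initial gap gives $\mathcal U_1$. The smoothing term $\sum_k\eta_k\delta_k^2 m^2\propto\sum_k (k+1)^{-1}$ gives the $\log K$ in $\mathcal U_2$. The variance term $\sum_k\eta_k^2 m^2\tilde L^2\tilde\ell$ gives $\mathcal U_4$, also with a $\log K$.

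\textbf{Step 4 (Tikhonov term).} The bias contributes
\begin{align*}
\sum_k\eta_k\frac{m^2\beta_k^2}{\delta_k^2}\propto\sum_k (k+1)^{-1/2-2\alpha+1/2}=\sum_k (k+1)^{-2\alpha}.
\end{align*}
This sum is summable exactly because $\alpha>1/2$, which is the time-scale separation in the statement. Dividing by $\sum_k\eta_k$ then gives the $1/\sqrt K$ rate of $\mathcal U_3$.
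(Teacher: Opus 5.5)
Your Steps 2--4 are the paper's proof. The paper likewise writes $\hat{\gbf}_k$ as the ideal two-point estimator $\gbf_k$, whose smoothing and variance errors it takes from \cite[Lemmas 1--2]{maheshwari2024follower}, plus the Tikhonov bias $\mathcal{E}_3=\hat{\gbf}_k-\gbf_k$. It bounds $\mathcal{E}_3$ through the $L_2$-Lipschitzness of $J_0(\ybf,\cdot)$ exactly as in your Step 1, and concludes from $\sum_k\beta_k^2\eta_k\delta_k^{-2}\propto\sum_k(k+1)^{-2\alpha}<\infty$. The genuine gap is the linear Tikhonov rate $\|\xbf_\beta(\ybf)-\xbf^*_\phi(\ybf)\|=O(\beta)$, and the derivation you sketch does not produce it. For fixed $\ybf$, adding the two variational inequalities gives $\beta\,\nabla\phi(\xbf_\beta)^\top(\xbf^*_\phi-\xbf_\beta)\ge(F(\xbf_\beta)-F(\xbf^*_\phi))^\top(\xbf_\beta-\xbf^*_\phi)\ge 0$, so $\beta$ simply divides out. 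Combined with $\mu$-strong monotonicity of $\nabla\phi$, this only yields $\|\xbf_\beta-\xbf^*_\phi\|\le\|\nabla\phi(\xbf^*_\phi)\|/\mu$, i.e., boundedness of the Tikhonov path with no factor $\beta$. Optimality of $\xbf^*_\phi$ on $\mathcal{E}(\ybf)$ cannot be tested at $\xbf_\beta$, which in general does not lie in $\mathcal{E}(\ybf)$.

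Your suspicion is right, and the rate cannot be absorbed into a constant under Assumptions \ref{Ass1}--\ref{StronglyConvex}. Take $\mathcal{X}_1=\mathcal{X}_2=\real$, $J_2\equiv 0$ and $\phi(\xbf)=\tfrac12((x_1-1)^2+x_2^2)$, so $\mu=1$, $\xbf^*_\phi=\mathbf{0}$ and $\|\nabla\phi(\xbf^*_\phi)\|=1$. With $J_1=\tfrac{\varepsilon}{2}x_1^2$ the regularized equilibrium is $(\beta/(\varepsilon+\beta),0)$, so any constant must depend on the conditioning of $F$, not only on $\mu$. With $J_1=x_1^4$ the equilibrium solves $4x_1^3+\beta(x_1-1)=0$, so $\|\xbf_\beta-\xbf^*_\phi\|\sim(\beta/4)^{1/3}$ and no $O(\beta)$ bound exists at all. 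The paper closes this step by citing \cite[Theorem 12.2.5]{facchinei2003finite} for $\|\xbf_\beta(\ybf)-\xbf^*_\phi(\ybf)\|\le\frac{\beta}{\mu}\|\nabla\phi(\xbf^*_\phi(\ybf))\|$ and then bounding $\|\nabla\phi\|\le B$. The affine example violates that inequality whenever $\varepsilon+\beta<1$, so you cannot borrow it either. Your fallback is therefore the correct and necessary repair: add an explicit hypothesis $\|\xbf_\beta(\ybf)-\xbf^*_\phi(\ybf)\|\le c\,\beta$ holding uniformly at all query points $\ybf_k$ and $\hat{\ybf}_k$. With it, your Steps 2--4 give the stated bound, with $C$ absorbing $c^2\mu^2$ and $\sum_k(k+1)^{-2\alpha}$. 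If only a rate $O(\beta^{\gamma})$ with $\gamma<1$ is available, the same bookkeeping requires $\alpha>1/(2\gamma)$ instead of $\alpha>1/2$.
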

\begin{proof}
   See the Appendix.
\end{proof}
\section{Numerical Results}\label{NumRes}
We consider a setup in which the lower-level agents engage in P2P electricity trading under network operational constraints, while the upper-level community manager sets the trading prices to minimize the overall community cost. For given trading prices, the lower-level game generally admits multiple equilibria, which can differ significantly in terms of community-level performance. In fact, some may be socially efficient and fair, while others may be inefficient or unfair, impacting metrics such as grid stress, energy self-sufficiency, internal trading volume, or user comfort. Consequently, the community manager has a strong incentive to guide the system, in a transparent way, toward an optimally selected equilibrium. For the lower-level problem, which is adapted from \cite{9732452}, and also used in \cite{benenatix2023optimal}, we consider the IEEE 13-bus distribution feeder, and we assume that each node is associated with one agent. In particular, each agent $i$ has decision authority on the power generated, $p^\mathrm{g}_{i,h}$, the power bought from the main grid, $p^\mathrm{mg}_{i,j}$, the power drawn from the storage unit, $p^\mathrm{st}_{i,j}$, the power traded with the trading partners, $p^\mathrm{tr}_{(i,j),h}$, $j \in \mathcal{N}_i$, and the phase at the
bus $\theta_{i,h}$ over the daily horizon $h = 1\dots,24$. For each $h=1,\dots,24$ and $i \in \mathcal{I}$, we let \begin{align*}
\xbf_{i,h}=\mathrm{col}\left(p^\mathrm{g}_{i,h}, p^\mathrm{mg}_{i,j}, p^\mathrm{st}_{i,j}, \{p^\mathrm{tr}_{(i,j),h}\}_{j \in \mathcal{N}_i}, \theta_{i,h}\right),
\end{align*}
and we denote $\xbf_i:=\mathrm{col}\left(\{\xbf_{i,h}\}_{h=1, \dots, 24}\right)$, $\xbf:=\mathrm{col}\left(\{\xbf_{i}\}_{i \in \mathcal{I}}\right)$. The goal of each agent is to minimize its cost \cite[Eq. (17)]{9732452}:
\begin{equation}
\begin{aligned}
    J_i(\xbf; \ybf)=&\sum_{h=1}^{24} f_{i,h}^\mathrm{g}(p_{i,h}^\mathrm{g})+f_{i,h}^\mathrm{tr}(\{p_{(i,j),h}^\mathrm{tr}\}_{j \in \mathcal{N}_i}\,; \ybf)+ \\
    &f_{i,h}^\mathrm{mg}(p_{i,h}^\mathrm{mg}, p_{-i,h}^\mathrm{mg}),
\end{aligned}
\end{equation}
where $f_{i,h}^\mathrm{g}$ is linear and returns the cost of power generation, $f_{i,h}^\mathrm{mg}$ as in \cite[Eq. (11)]{9732452} returns the cost of buying energy from the main grid, and $f_{i,h}^\mathrm{tr}$ as in \cite[Eq. (7)]{9732452} returns the cost of trading with partner agents and depends on the per-unit price of trading, $\ybf$, set by the community manager. Moreover, each agent $i \in \mathcal{I}$ is subject to local feasible set $\mathcal{X}_i$, which takes into account the satisfaction of the power demand and the operating constraints of the devices, and to affine shared constraints, $g(\xbf)\le \mathbf{0}$, which include the linearized power flow equations as in \cite[Section VI]{benenatix2023optimal}, the trading reciprocity as in \cite[Eq. (8)]{9732452}, and the operating limits of the grid. 

At the upper level, the community manager aims at setting the price of trading of each hour, $\ybf$, in order to minimize the community aggregative cost without deviating too much from a reference price (e.g., agreed through a bilateral contract):
\begin{align*}
    J_0(\ybf, \xbf) = \sum_{i\in \mathcal{I}} J_i(\xbf, \ybf)+\lambda \left\|\ybf-\bar{\ybf}\right\|^2.
\end{align*}

Moreover, the community leader wants to incentivize the agents to minimize the load (hence, the stress) on the grid, maximize the renewable energy production, and reduce the grid imbalances. To do so, the optimal selection criterion that is enforced at the lower-level is:
\begin{equation}\label{phinum}
\begin{aligned}
    \phi(\xbf)=&\sum_{h=1}^{24} \left\|\pbf_h^\mathrm{g}-\bar{\pbf}^\mathrm{g}\right\|^2+\left\|\pbf_h^\mathrm{mg}\right\|^2+\left\|\mathbf{\theta}_h-\bar{\mathbf{\theta}}\right\|^2\\&+\left\|\pbf_h^\mathrm{tr}\right\|^2+\left\|\pbf_h^\mathrm{st}\right\|^2,
\end{aligned}
\end{equation}
where $\bar{\pbf}^\mathrm{g}$ includes the maximum generation production of each agent, while $\bar{\mathbf{\theta}}$ is a vector with all components equal to the phase of the node connected to the main grid \cite{benenatix2023optimal}. We remark that the game satisfies Assumptions \ref{Ass1}, \ref{Ass2}, \ref{ConvexSet}, \ref{Ass4}, and the selection function in \eqref{phinum} satisfies Assumption \ref{StronglyConvex}.
We ran Algorithm \ref{betaZOM} with random initial condition with $\beta_k=\bar{\beta} (k+1)^{-1}$. In Fig. \ref{PLOTGRA}, the convergence of the leader's objective function, under the selection criterion in \eqref{phinum}, is shown. By maintaining a suitable time-scale separation between the updates of the trading price and the incentive term, the community manager can adjust both jointly without hindering convergence. As a result, the objective decreases and eventually converges toward a solution of the Stackelberg game with induced optimal selection.

\begin{figure}
        \includegraphics[width=\columnwidth]{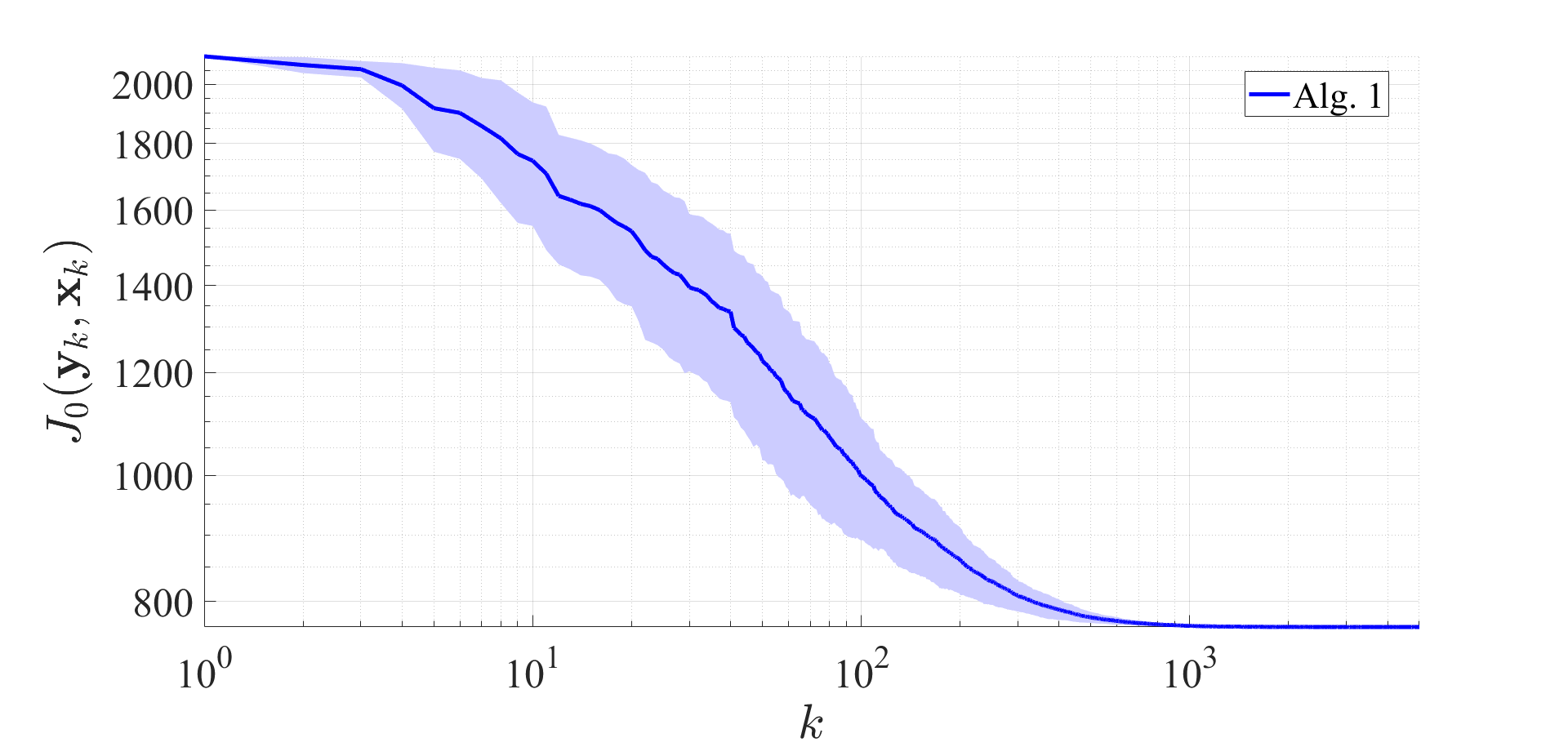}
        \captionsetup{width=\columnwidth}
        \caption{Evolution of $J_0(\ybf_k, \xbf_k)$ throughout the iterations of Alg. \ref{betaZOM} in log-log scale.}
        \label{PLOTGRA}
\end{figure}

\section{Conclusion}\label{Conclusion}
In this paper, we have studied Stackelberg games in which the lower-level admits multiple $v$-GNE, which might potentially lead to ill-posedness. To address this issue, we proposed a framework that guarantees well-posedness by enforcing optimal equilibrium selection at the lower level through a leader-designed incentive term embedded in the followers’ game. Next, we developed a follower-agnostic zeroth-order method that enables the leader to iteratively update both its decision variable and the incentive, without requiring prior knowledge of the followers’ model. We showed that, by maintaining a suitable time-scale separation between the updates of these two terms, convergence to a solution of the resulting Stackelberg game with induced optimal selection is guaranteed. Finally, we validated the proposed approach in the context of energy communities, where agents engage in peer-to-peer trading under prices set by a community manager acting as the leader. Our numerical results show that the additional incentive term enables the community manager to steer the selection of energy profiles towards a desired equilibrium.

\bibliographystyle{IEEEtran}
\bibliography{biblio}
\appendix

\emph{Proof of Theorem \ref{BoundZOM}:}
The gradient estimator $\hat{\gbf}_k$ in \eqref{Gradient} is affected by three error components: $\mathcal{E}_1:=\mathbb{E}\left[\gbf_k\right]-\nabla J_0(\ybf_k,\xbf^*_\phi(\ybf_k))$ represents the error due to using an approximation of the true gradient, $\mathcal{E}_2:=\gbf_k-\mathbb{E}\left[\gbf_k\right]$ is the bias due to the randomness of the gradient estimator $\gbf_k$, and $\mathcal{E}_3:=\hat{\gbf}_k-\gbf_k$ denotes the additional bias introduced by our proposed estimator $\hat{\gbf}_k$. By separately bounding each error term, we show that $J_0(\ybf_k, x^*_\phi(\ybf_k))$ approximately decreases throughout the iterates $\left(\ybf_k\right)_{k \in [K]}$.  

While the presence of $\mathcal{E}_1$ and $\mathcal{E}_2$ is standard in the zeroth-order method literature \cite[Theorem 1]{maheshwari2024follower}, $\mathcal{E}_3$ is the novel term of our approach, for which time-scale separation between $\left(\eta_k\right)_{k \in [K]}$ and $\left(\beta_k\right)_{k \in [K]}$ must be ensured. 

The error term $\mathcal{E}_3$ can be bounded as follows:
\begin{multline}
    \left\|\gbf_k-\hat{\gbf}_k\right\|,^2 \le \frac{m^2}{\delta_k^2} \left\|J_0(\ybf_k, x_{\beta_k}(\ybf_k))-J_0(\ybf_k, x_{\phi}^*(\ybf_k))\right\|^2 \\
    + \frac{m^2}{\delta_k^2}\left\|J_0(\hat{\ybf}_k, x_{\beta_k}(\hat{\ybf}_k))-J_0(\hat{\ybf}_k, x_{\phi}^*(\hat{\ybf}_k))\right\|^2 \le \\
    \frac{m^2L_2^2}{\delta_k^2} \left(\left\|x_{\beta_k}(\ybf_k)-x_\phi^*(\ybf_k)\right\|^2+\left\|x_{\beta_k}(\hat{\ybf}_k)-x_\phi^*(\hat{\ybf}_k)\right\|^2\right) \le \\ 
    \frac{m^2L_2^2\beta_k^2}{\delta_k^2 \mu^2} \left(\left\|\nabla\phi(x^*_\phi(\ybf_k))\right\|^2+\left\|\nabla\phi(x^*_\phi(\hat{\ybf}_k))\right\|^2\right),
\end{multline}
where we exploited the Lipschitzness of $J_0(\ybf, \xbf)$ and the fact that $\left\|x_\beta(\ybf_k)-x_\phi^*(\ybf_k)\right\|\le \frac{\beta}{\mu}\left\|\nabla \phi(x_\phi^*(\ybf_k))\right\|$ \cite[Theorem 12.2.5]{facchinei2003finite}.

Also, by upper bounding $\left\|\nabla \phi(x_\phi^*(\ybf_k))\right\|$, we get 
\begin{align}\label{E3}
\mathcal{E}_3^2 \le \frac{m^2L_2^2\beta_k^22B^2}{\delta_k^2 \mu^2}.
\end{align}
Next, the proof follows similar steps as \cite[Proof of Theorem 1]{maheshwari2024follower}, with the only difference that $\mathcal{E}_3$ is different, and upper bounded as in \eqref{E3}. As such, from \cite[Lemma 1, Lemma 2]{maheshwari2024follower}, we obtain:
\begin{multline*}
        \frac{1}{\sum_{k=1}^K \eta_k} \sum_{k=1}^K \eta_k \mathbb{E}\left[\left\|\nabla J_0(\ybf_k,\xbf^*_\phi(\ybf_k))\right\|^2\right]\le \\\mathcal{U}_1 + \mathcal{U}_2 + \frac{m^2L_2^2 4 B^2}{\mu^2 } \frac{\sum_{k=1}^K\beta_k^2\eta_k\delta_k^{-2}}{\sum_{k=1}^K\eta_k}+\mathcal{U}_4.
        \end{multline*}
By choosing $\beta_k=\bar{\beta}(k+1)^{-\alpha}$ with $\alpha > \frac{1}{2}$, the bound in \eqref{BoundZOM} is recovered, which concludes the proof.
\end{document}